\newtheorem{theorem}{Theorem}[section]
\newtheorem{corollary}[theorem]{Corollary}
\theoremstyle{definition}
\newtheorem{definition}[theorem]{Definition}
\newtheorem{example}[theorem]{Example}
\theoremstyle{remark}
\newtheorem{remark}[theorem]{Remark}
\numberwithin{equation}{section}
\begin{document}

\setcounter{page}{1}

\title[Terzioglu Thm]{On a theorem of Terz\. {I}o\u{g}lu}
\begin{center}
\author[A. AKSOY ]{Asuman G\"{u}ven AKSOY}
\end{center}

\address{$^{*}$Department of Mathematics, Claremont McKenna College, 850 Columbia Avenue, Claremont, CA  91711, USA.}
\email{\textcolor[rgb]{0.00,0.00,0.84}{aaksoy@cmc.edu}}



\subjclass[2010]{Primary 47B07; Secondary 47B06}

\keywords{Compact Operators, Approximation Schemes.}


\begin{abstract}
The theory of compact linear operators acting on a Banach space has such a classical core and is familiar to many. Perhaps lesser known is the characterization theorem of Terzio\u{g}lu for compact maps. In this paper we consider Terzio\u{g}lu's theorem and its consequences.  We also  give a similar characterization theorem in case where  there is an approximation scheme on the Banach space.
\end{abstract} \maketitle

\section{Introduction}
Let $X$ and $Y$ be Banach spaces and $T:X \to Y$ be an operator. We say $T$ is compact if and only if  it maps closed unit ball  $B_X$ of $X$ into a pre-compact subset of $Y$.  In other words, $T$ is compact if and only if for every norm bounded sequence $\{x_n\}$ of $X$, the sequence $\{Tx_n\}$ has a norm convergent subsequence in $Y$. Equivalently, $T$ is compact if and only if for every $\epsilon >0$, there exists elements  $y_1,y_2, \dots,y_n \in Y$ such that $$ T(B_{X}) \subseteq \bigcup_{k=1}^{n} \{y_k+\epsilon B_Y\}$$ where by $B_X$  and $B_Y$ we mean the closed unit balls of $X$ and $Y$ respectively. Every compact linear operator is bounded, hence continuous, but clearly not every bounded linear map is compact since one can take the identity operator on an infinite dimensional space $X$.  Compact operators are natural generalizations of finite rank operators and thus dealing with compact operators provides us with the closest analogy to the usual theorems of finite dimensional spaces.  Recall that  $\mathcal{L}(X,Y)$  denotes the normed vector space of all continuous operators from $X$ to $Y$ and $\mathcal{L}(X)$ stands for $\mathcal{L}(X,X)$ and $\mathcal{K}(X,Y)$ is the collection of all compact operators from $X$ to $Y$.  It is well known that if  $Y$ is a Hilbert space  then any compact  $T:X \to Y$ is a limit of finite rank operators, in other words if $\mathcal{F}(X,Y)$ denotes the class of finite rank maps then, 
$$  \mathcal{K}(X,Y)= \overline{\mathcal{F}(X,Y)}$$  where the closure is taken in the operator norm. However, the situation is quite different for Banach spaces, not every operator  between Banach spaces is a uniform  limit of finite rank maps. For further information we refer the reader to a well known example due to P. Enflo  \cite{Enflo}, in which Enflo constructs a Banach space without the approximation property. The following classical results on compact operators will be used  for our  discussion later. 
\begin{theorem}
For Banach spaces $X$, $Y$ and $Z$, we have the following:
\begin{enumerate}
\item $\mathcal{K}(X,Y)$ is a norm closed vector  subspace of $\mathcal{L}(X,Y)$ .
\item If $
X\xrightarrow[]{~~~S~~~}Y\xrightarrow[]{~~~T~~~}Z 
$ are continuous operators and either $S$ or $T$ is compact, then $TS$  is likewise compact.

\end{enumerate}
\end{theorem}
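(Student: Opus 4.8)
The plan is to treat the two assertions separately, relying throughout on the two equivalent descriptions of compactness recorded in the introduction: the sequential one (every bounded sequence has an image with a norm-convergent subsequence) and the covering one (the image of $B_X$ can be covered by finitely many $\epsilon$-balls).

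For the vector-space structure in (1), I would first note that the zero map is trivially compact, and then establish closure under linear combinations. Given compact $S,T$, scalars $\alpha,\beta$, and a bounded sequence $\{x_n\}$, I would extract a subsequence along which $\{Sx_n\}$ converges, then pass to a further subsequence of that along which $\{Tx_n\}$ also converges; along this final subsequence $\{(\alpha S+\beta T)x_n\}$ converges, so $\alpha S+\beta T$ is compact. The substantive point is the norm-closedness, which is where I expect the real work to lie. Suppose $T_n\to T$ in $\mathcal{L}(X,Y)$ with each $T_n$ compact; I would prove that $T(B_X)$ is totally bounded via an $\epsilon/3$ argument. Fix $\epsilon>0$ and pick $n$ with $\|T-T_n\|<\epsilon/3$. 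Compactness of $T_n$ yields finitely many $y_1,\dots,y_m$ with $T_n(B_X)\subseteq\bigcup_k\{y_k+(\epsilon/3)B_Y\}$. For $x\in B_X$, choosing $k$ with $\|T_nx-y_k\|<\epsilon/3$ and applying the triangle inequality gives $\|Tx-y_k\|<\epsilon$, so the same finite family of balls, enlarged to radius $\epsilon$, covers $T(B_X)$.

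Since $Y$ is complete, total boundedness of $T(B_X)$ is exactly precompactness, and hence $T$ is compact. The completeness of $Y$ is essential at precisely this step, and it is the hypothesis I would be most careful to invoke; a diagonal-subsequence argument would be an equally valid but slightly more bookkeeping-heavy alternative, and I would favor the covering formulation for cleanliness.

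For the ideal property (2), I would argue directly from the sequential characterization in each of the two cases. Let $\{x_n\}$ be bounded. If $S$ is compact, pass to a subsequence with $Sx_{n_k}\to y$; continuity of $T$ then forces $TSx_{n_k}\to Ty$, giving the required convergent subsequence. If instead $T$ is compact, boundedness of $S$ makes $\{Sx_n\}$ a bounded sequence in $Y$, whence compactness of $T$ supplies a subsequence along which $\{T(Sx_n)\}$ converges. Either way, $TS$ maps bounded sequences to sequences with convergent subsequences, so $TS$ is compact. No completeness issue arises in this part, so I expect it to be entirely routine.
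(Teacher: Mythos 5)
Your proof is correct and entirely standard: the $\epsilon/3$ covering argument for norm-closedness (using completeness of $Y$ to pass from total boundedness to precompactness) and the sequential argument for the ideal property are exactly the textbook route. The paper itself states this theorem as a known classical fact and supplies no proof, so there is nothing to compare against; your argument fills that gap correctly and has no errors or omissions.
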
 
If one consider the continuos operators on a Banach space $X$, the above theorem asserts the fact that compact operators on $X$ form a two sided ideal in $\mathcal{L}(X)$. The following theorem of Schauder simply states that an operator is compact if and only if its adjoint is compact.
\begin{theorem} [Schauder]
A norm bounded operator $T: X \to Y$ between Banach spaces is compact if and only if its adjoint $T^*: Y^* \to X^*$ is compact.
\end{theorem}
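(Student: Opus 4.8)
The plan is to prove the two implications separately: the forward direction (if $T$ is compact then $T^*$ is compact) will rest on the Arzel\`a--Ascoli theorem, while the reverse direction (if $T^*$ is compact then $T$ is compact) will be obtained by applying the forward direction to $T^*$ together with the canonical isometric embeddings into the biduals.

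For the forward direction, suppose $T$ is compact, so that $K = \overline{T(B_X)}$ is a compact subset of $Y$. Given an arbitrary sequence $(g_n)$ in $B_{Y^*}$, I would view the restrictions $g_n|_K$ as a family of continuous scalar functions on the compact metric space $K$. These are uniformly bounded, since $|g_n(y)| \le \|y\|$ and the norm is bounded on the compact set $K$, and equicontinuous, since $|g_n(y) - g_n(y')| \le \|y - y'\|$ uniformly in $n$. By Arzel\`a--Ascoli there is a subsequence $(g_{n_k})$ converging uniformly on $K$. The decisive estimate is then
$$\|T^* g_{n_k} - T^* g_{n_j}\| = \sup_{x \in B_X} |(g_{n_k} - g_{n_j})(Tx)| \le \sup_{y \in K} |(g_{n_k} - g_{n_j})(y)|,$$
so $(T^* g_{n_k})$ is Cauchy in the complete space $X^*$ and hence converges. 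This shows $T^*(B_{Y^*})$ is relatively compact, i.e.\ $T^*$ is compact.

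For the reverse direction, suppose $T^*$ is compact. Applying the forward direction to the operator $T^*$ shows that $T^{**} : X^{**} \to Y^{**}$ is compact. Writing $J_X$ and $J_Y$ for the canonical isometric embeddings into the biduals, the compatibility relation $T^{**} J_X = J_Y T$ gives $J_Y(T(B_X)) = T^{**}(J_X(B_X)) \subseteq T^{**}(B_{X^{**}})$. The set on the right is relatively compact, so the subset $J_Y(T(B_X))$ is relatively compact; since $J_Y$ is an isometry onto its image, $T(B_X)$ is itself relatively compact in $Y$, and hence $T$ is compact.

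The main obstacle is the forward direction: the real content lies in recognizing the unit ball of $Y^*$, restricted to the compact set $K$, as a uniformly bounded equicontinuous family so that Arzel\`a--Ascoli applies, and then in passing from uniform convergence on $K$ back to norm convergence of the adjoints via the displayed estimate. The reverse direction is essentially a bootstrap, with the only subtleties being the identity $T^{**} J_X = J_Y T$ and the fact that an isometric embedding preserves relative compactness.
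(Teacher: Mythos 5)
Your proof is correct and complete. Note that the paper does not actually prove Schauder's theorem: it only records the heuristic that $\|P_nT-T\|\to 0$ implies $\|T^*P_n-T^*\|\to 0$ for projections $P_n$ onto $\mathrm{span}\{e_1,\dots,e_n\}$, and defers to Yosida for details. That heuristic presupposes a finite-rank approximation of $T$ (hence a Schauder basis or the approximation property on the relevant space), so it cannot serve as a proof for arbitrary Banach spaces; by contrast, your forward direction via Arzel\`a--Ascoli applied to the restrictions $g_n|_K$ on $K=\overline{T(B_X)}$ is fully general, and is in fact essentially the argument in the reference the paper cites. Your reverse direction, bootstrapping through $T^{**}J_X=J_YT$ and the isometry $J_Y$, is also sound; the only point worth making explicit is that relative compactness of $J_Y(T(B_X))$ in $Y^{**}$ transfers to $T(B_X)$ in $Y$ because $J_Y(Y)$ is closed in $Y^{**}$ ($Y$ being complete) and $J_Y^{-1}$ is an isometry on it, or equivalently because total boundedness is preserved and $Y$ is complete. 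In short: you supply a genuine, self-contained proof where the paper offers only a pointer, and your route is the standard one for the general Banach space setting.
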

The main idea in proving Schauder's theorem lies in the fact that $$ ||P_n T -T|| \to 0 \quad \mbox{implies} \quad || T^* P_n -T^*|| \to 0 $$ where $P_n : X \to \mbox{span} \{e_1,\dots, e_n\}$. A well known  proof of Schauder's theorem may be found in Yosida [\cite{Yos}, p.282].

For our discussion below we also need the following characterization of the compact sets in a Banach space; in some sense, it is a comment on the smallness of compact sets. 
\begin{theorem}[Grothendieck] \label{thm:Groth}
A subset of a Banach space is compact  if and only if  it is included in the closed convex hull of a sequence that converges in norm to zero.
\end{theorem}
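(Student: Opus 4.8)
The plan is to prove the two implications separately, treating the ``if'' direction as routine and reserving the bulk of the effort for the ``only if'' direction. Throughout I write $\overline{\operatorname{conv}}(S)$ for the closed convex hull of a set $S$. For the easy direction, suppose $K \subseteq \overline{\operatorname{conv}}\{x_n\}$ with $\|x_n\| \to 0$. I would observe that $S = \{x_n : n \in \mathbb{N}\} \cup \{0\}$ is compact, being a norm-convergent sequence together with its limit. The key input is the classical theorem of Mazur that in a \emph{complete} normed space the closed convex hull of a compact set is again compact; this is exactly where completeness of $X$ is used. Granting it, $\overline{\operatorname{conv}}\{x_n\}$ is compact, and since $K$ sits inside it, $K$ is relatively compact (and compact when closed). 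If Mazur's theorem is not to be quoted as a black box, I would establish it by checking that the convex hull of a totally bounded set is totally bounded and then invoking completeness to pass to the compact closure.

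The substance is the converse. Assume $K$ is compact, hence bounded with $M := \sup_{x \in K}\|x\| < \infty$ and totally bounded. The idea is to build finite nets at geometrically decreasing scales and assemble them into a single null sequence. Fix $\epsilon_n = 4^{-n}$. Using total boundedness I would construct recursively finite sets $G_n \subseteq X$ together with a telescoping approximation: every $x \in K$ admits a decomposition $x = \sum_{n=1}^\infty g_n$ with $g_n \in G_n$, $\|g_1\| \le M$, and $\|g_n\| \le \epsilon_{n-1}$ for $n \ge 2$, the series converging because $\sum_n \epsilon_{n-1} < \infty$. Concretely, $G_1$ is an $\epsilon_1$-net of $K$; having peeled off the nearest net point, the residuals $x - g_1$ lie in a compact set contained in $\epsilon_1 B_X$, of which $G_2$ is an $\epsilon_2$-net, and so on down the scales.

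To convert these sums into convex combinations I would rescale. Put $c_n = 2^n$, so that $\sum_n c_n^{-1} = 1$ while $c_n \epsilon_{n-1} = 4 \cdot 2^{-n} \to 0$. Writing $x = \sum_n c_n^{-1}\,(c_n g_n)$ exhibits $x$ as a limit of convex combinations of the points $c_n g_n$, where the deficit $2^{-N}$ in the $N$-th partial sum is absorbed by the limit point $0$. The enlarged sets $2^n G_n$ are finite with $\sup_{a \in 2^n G_n}\|a\| \to 0$, so enumerating $\bigcup_n 2^n G_n$ block by block produces a sequence $\{y_k\}$ with $\|y_k\| \to 0$, and the computation above yields $K \subseteq \overline{\operatorname{conv}}\{y_k\}$, as required.

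I expect the main obstacle to be the bookkeeping in the recursive construction: one must simultaneously guarantee that each residual set remains totally bounded (so that the next net exists), that the generators $g_n$ shrink fast enough to be summable, and that after rescaling by $c_n$ the generators still tend to zero while the coefficients $c_n^{-1}$ sum to exactly one. Balancing these three constraints is precisely what the choices $\epsilon_n = 4^{-n}$ and $c_n = 2^n$ are engineered to achieve, and verifying that these rates are mutually compatible is the delicate point of the argument.
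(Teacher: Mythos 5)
The paper does not prove this theorem; it only cites Diestel (p.~3), and your argument is essentially the standard telescoping-net proof found there, so there is no divergence of method to report. Your proof is correct: Mazur's theorem handles the easy direction (with the appropriate caveat, which you note, that one really gets relative compactness of $K$), and the recursive $\epsilon_n$-net construction with the $4^{-n}$/$2^n$ balancing is exactly the classical device. Two small points to tighten: the residual sets $\{x - g_1 - \dots - g_n\}$ are totally bounded but need not be compact (total boundedness is all you need, and you say as much later, but the phrase ``compact set'' in the construction is not justified); and to absorb the deficit $2^{-N}$ you should either adjoin $0$ to the sequence $\{y_k\}$ (harmless, since it remains a null sequence) or pad the $N$-th partial sum with $2^{-N}y_{k_0}$ for a fixed $y_{k_0}$ and let $N\to\infty$, since $0$ itself need not be a term of the sequence and a sub-convex combination is not literally an element of $\operatorname{conv}\{y_k\}$.
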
 In other words, if we have $K$ a compact subset of a Banach space $X$, then we can find a sequence $\{x_n\}$ in $X$ such that $$ ||x_n|| \to 0 \quad \mbox{and} \quad K \subseteq \overline{\mbox{co}}\{x_n\}.$$ For a proof we refer the reader to[ \cite{Dis} , p.3]

 \section{ Terz\.{i}o\u{g}lu's Theorem}
\begin{theorem}[Terzio\u{g}lu \cite{Ter1}] \label{thm:Tosun}
An operator $T:X \to Y$ between two Banach spaces is compact if and only if there exists a sequence $\{u_n\}$  of linear functionals in $X^*$ with $|| u_n || \to 0$ such that the inequality 
$$ ||Tx|| \leq \sup_{n} \left | < u_n, x> \right | $$ holds for every $x\in X$.
\end{theorem}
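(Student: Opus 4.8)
The plan is to prove the two implications separately; in both directions the workhorse is Grothendieck's characterization of compact sets (Theorem \ref{thm:Groth}), supplemented in the harder direction by Schauder's theorem.

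First I would treat sufficiency: assume the sequence $\{u_n\}\subseteq X^*$ with $\|u_n\|\to 0$ and the inequality $\|Tx\|\le \sup_n|\langle u_n,x\rangle|$. The idea is to factor $T$ through the space $c_0$. Since $|\langle u_n,x\rangle|\le\|u_n\|\,\|x\|\to 0$ for each fixed $x$, the formula $Sx=(\langle u_n,x\rangle)_n$ defines a bounded linear map $S\colon X\to c_0$. I would first check that $S$ is compact: for $x\in B_X$ the $n$-th coordinate of $Sx$ is bounded by $\|u_n\|$, so $S(B_X)$ lies in the ``box'' $\{a\in c_0:|a_n|\le\|u_n\|\ \text{for all }n\}$, which is compact in $c_0$ precisely because $\|u_n\|\to 0$. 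Next, the hypothesis inequality shows $\ker S\subseteq\ker T$ and lets me define $R(Sx)=Tx$ on the subspace $S(X)\subseteq c_0$; the same inequality gives $\|R(Sx)\|=\|Tx\|\le\|Sx\|_\infty$, so $R$ is bounded (of norm at most $1$) and extends by continuity to $\overline{S(X)}$. Then $T=R\circ S$ is the composition of a compact operator with a bounded one, hence compact by part (2) of the first theorem quoted above.

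For necessity, assume $T$ is compact. The key move is to apply Grothendieck's theorem not to $T(B_X)\subseteq Y$ but to the dual operator: by Schauder's theorem $T^{*}\colon Y^{*}\to X^{*}$ is compact, so $\overline{T^{*}(B_{Y^{*}})}$ is a compact subset of $X^{*}$. Grothendieck's theorem then furnishes a sequence $\{u_n\}$ in $X^{*}$ with $\|u_n\|\to 0$ and $\overline{T^{*}(B_{Y^{*}})}\subseteq\overline{\mathrm{co}}\{u_n\}$. To verify the required inequality I would use the duality formula $\|Tx\|=\sup_{g\in B_{Y^{*}}}|\langle T^{*}g,x\rangle|$: each $T^{*}g$ is a norm-limit of finite convex combinations $\sum_k\lambda_k u_{n_k}$, and for any such combination $\bigl|\langle\sum_k\lambda_k u_{n_k},x\rangle\bigr|\le\sum_k\lambda_k|\langle u_{n_k},x\rangle|\le\sup_n|\langle u_n,x\rangle|$; passing to the limit (the pairing with fixed $x$ is norm-continuous on $X^{*}$) and taking the supremum over $g\in B_{Y^{*}}$ yields $\|Tx\|\le\sup_n|\langle u_n,x\rangle|$.

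The step I expect to be the main obstacle is recognizing, in the necessity direction, that Grothendieck's theorem must be invoked in $X^{*}$ rather than in $Y$: one needs functionals on $X$, not points of $Y$, which is exactly why Schauder's theorem is brought in to transfer compactness to $T^{*}$. Once the factorization through $c_0$ and this dual viewpoint are in place, the remaining technical points (compactness of the box in $c_0$, well-definedness and boundedness of $R$, and the convex-combination estimate) are routine.
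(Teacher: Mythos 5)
Your proof is correct, and the necessity direction coincides with the paper's argument: Schauder's theorem to move compactness to $T^{*}$, Grothendieck's theorem applied to $T^{*}(B_{Y^{*}})$ in $X^{*}$, and the convex-combination estimate against $\|Tx\|=\sup_{\|v\|\le 1}|\langle T^{*}v,x\rangle|$. (Your version is in fact slightly more careful here, since you pass to the limit over finite convex combinations rather than asserting outright that every element of $\overline{\mathrm{co}}\{u_n\}$ is a series $\sum\alpha_n u_n$.) Where you genuinely diverge is the sufficiency direction. The paper argues on the dual side: fixing $\epsilon>0$, choosing $N$ with $\|u_n\|<\epsilon$ for $n>N$, and showing that $T^{*}$ maps the unit ball of $Y^{*}$ into $\epsilon\,\mathring{U}+M_{\epsilon}^{\perp}$ with $M_{\epsilon}^{\perp}$ finite-dimensional, whence $T^{*}$ (and so $T$) is compact. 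You instead factor $T=R\circ S$ through $c_0$, with $Sx=(\langle u_n,x\rangle)_n$ compact because $S(B_X)$ sits in the box $\{a:|a_n|\le\|u_n\|\}$, and $R$ contractive on $S(X)$ by the hypothesis inequality. Your route is more self-contained (it avoids polars and the unproved inclusion $T^{*}(\mathring{V})\subset\epsilon\mathring{U}+M_{\epsilon}^{\perp}$, and it does not re-invoke Schauder in this direction), and it has the pleasant feature of anticipating exactly the $c_0$-factorization the paper later uses to prove the $\mathcal{P}_{\lambda}$-space representation theorem; the paper's polar argument, on the other hand, keeps both directions of the proof symmetric in that both are carried out in $X^{*}$.
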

\begin{proof}
Suppose $T: X \to Y$ is compact, then by Schauder's theorem $T^*: Y^* \to X^*$ is compact; thus by definition, if $V$ denotes the closed unit ball of $Y^*, $ $T^*(V)$ is a norm totally bounded subset of $X^*$. Now applying Grothendieck's result, we have a sequence $\{u_n\}$ of elements of $X^*$ with $||u_n|| \to 0$ and $T^* (V) \subseteq \overline{\mbox{co}\{u_n\}}$. In other words, each element of $T^*(V)$ can be written  of the form 
$$ \sum_{n=1}^{\infty} \alpha_n u_n \quad \mbox{with}\quad \sum_{n=1}^{\infty} |\alpha_n | \leq 1.$$  Thus, for each $x\in X$ we have
$$||Tx|| =\sup_{||v|| \leq 1} \left| <T^*v, x>\right| \leq \sum_{n=1}^{\infty} |\alpha_n| \sup _{n} | < u_n, x>|.$$
Suppose $T$ satisfies the inequality $ ||Tx|| \leq \sup_{n} \left | < u_n, x> \right | $ for some sequence $\{u_n\} \in X^* $. For  $\epsilon >0$ choose $N$ such that $||u_n|| < \epsilon$ for $n> N$ and set $$M_{\epsilon} = \{ x\in X:\,\, < u_i,x> =0  \quad \mbox{for}\,\, i=1,2,\dots N  \}, $$ then one can have 

 $$T^* (\mathring{V}) \subset  \epsilon  \mathring{U} + M_{\epsilon}^{\bot}$$ where $U$ denotes the unit ball of $X$ and  for each linear subspace $M$ of $X$, the polar of $M$ denoted by  $\mathring{M}$ is a linear subspace of $X^*$ defined as:
$$ \mathring{M}:= \left\{ a\in X^*:\quad  |<x,a>|\leq 1 \quad \mbox{for}\quad x\in M\right\},$$

 this shows that $T^*$ is compact  and hence $T$ is compact.

\end{proof}

 An application of the Theorem \ref{thm:Tosun}  yields that every compact mapping of a Banach space into a $\mathcal{P}_{\lambda}$-space is $\infty$-nuclear.
\begin{definition}
We say $X$ is a  $\mathcal{P}_{\lambda}$ space, ($\lambda \geq 1$) if every bounded linear operator $T$ from a Banach space $Y $ to $X$ and every $Z \supset Y$ there is a linear extension $\tilde {T}$ of $ Z $ to $X$ with $$||\tilde{T} ||\leq \lambda ||T||.$$ 
\end{definition}
As illustrated in the following diagram:
\[
\begin{tikzcd}
&Z \arrow{rd}{\overset{\sim}{T}}\\
&Y \arrow[hook]{u} \arrow{r}{}[swap]{T}
&X
\end{tikzcd}
\]

 If $||\tilde{T} ||= ||T||$ in the above definition, we call $X$ extendible.  This property is related to  the existence of a global Hahn-Banach type an extension.  J. Lindenstrauss in \cite{Lin1} examines the problem  when is the extension $\tilde{T}$ is compact if  $T$ itself is compact and the author's results are diverse and numerous and touches upon many related topics. 
 Next, we define \textit{infinite nuclear} mappings, this concept was first introduced in \cite{Pit2}.
 \begin{definition}
 Let $X $ and $Y$ be Banach spaces and $T:X  \to Y $ a linear operator. Then $T $ is said to be infinite-nuclear, if there are sequences $\{u_n\} \subset  X^*$ and $\{y_n\} \subset Y$ such that $\displaystyle \lim_{n} ||u_n|| = 0$, 
$$\displaystyle \sup_{||v|| \leq 1}\left \{\displaystyle \sum_{n=1}^{\infty} |v(y_n)|:\,\, v \in Y^*\right \} < +\infty$$ and  $$T x = \displaystyle \sum_{n=1}^{\infty}< u_n, x>y_n $$ 
  for $ x \in X$. 
 \end{definition}
  As an application to Terzio\u{g}lu's Theorem, under the condition that $T: X \to Y$ where $Y$ is a $\mathcal{P}_{\lambda}$-space, Terzio\u{g}lu also  obtains a precise expression for $Tx$, which we state in the following:
  \begin{theorem}[\cite{Ter1}]
  Let $T$ be a compact mapping of a Banach space $X$ into a $\mathcal{P}_{\lambda}$ space $Y$.Then for every $\epsilon>0$ there exists sequence $\{u_n\}$ in $ X^*$ with $$\displaystyle\lim_n ||u_n|| =0 \quad \mbox{ and}\quad  \displaystyle \sup_{n} ||u_n|| \leq ||T|| + \epsilon$$ and a sequence $\{y_n\} \in Y$ with $\displaystyle \sup_{||v|| \leq 1} \displaystyle \sum_{n=1}^{\infty} |< v, y_n> | \leq \lambda $ such that $T$ has the form
 $$ Tx= \sum_{n=1}^{\infty} <u_n, x> y_n.$$
  
  \end{theorem}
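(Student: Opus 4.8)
The plan is to combine the conclusion of Terzio\u{g}lu's Theorem~\ref{thm:Tosun} (in a norm-refined form) with a factorization of $T$ through $c_0$, and then to exploit the defining extension property of the $\mathcal{P}_\lambda$ space $Y$ to manufacture the two representing sequences. In outline: produce $\{u_n\}\subset X^*$ with the right norm behaviour, use it to map $X$ into $c_0$, extend the induced operator off the range to all of $c_0$ using the $\mathcal{P}_\lambda$ property, and read off the $y_n$ as images of the coordinate vectors.

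First I would extract from the compactness of $T$ a sequence $\{u_n\}\subset X^*$ with $\|u_n\|\to 0$, with $\sup_n\|u_n\|\le\|T\|+\epsilon$, and with $\|Tx\|\le\sup_n|\langle u_n,x\rangle|$ for all $x\in X$. The inequality together with $\|u_n\|\to 0$ is exactly what Theorem~\ref{thm:Tosun} provides, obtained by applying Grothendieck's Theorem~\ref{thm:Groth} to the totally bounded set $T^*(V)$, where $V$ is the closed unit ball of $Y^*$. The only extra point is the quantitative bound $\sup_n\|u_n\|\le\|T\|+\epsilon$: since $\|T^*\|=\|T\|$ we have $T^*(V)\subseteq\|T\|\,B_{X^*}$, and I would produce the sequence from a norm-tracking version of Grothendieck's theorem, building successive $\delta_n$-nets whose radii decay geometrically and assigning convex weights in a matching geometric progression. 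This quantitative refinement is the step I expect to be the main obstacle: the naive enumeration of the net vectors represents each element of $T^*(V)$ as an infinite \emph{sum} rather than a convex combination, and the fix is to rescale each net vector by its weight. Making this work forces a delicate balance, the weights must sum to $1$, yet the rescaled vectors must be bounded in norm by $\|T\|+\epsilon$ and still tend to $0$, which is exactly achievable by choosing the covering radius strictly smaller than the weight ratio.

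Next I would factor $T$ through $c_0$. Define $J:X\to c_0$ by $Jx=(\langle u_n,x\rangle)_n$; this lands in $c_0$ because $|\langle u_n,x\rangle|\le\|u_n\|\,\|x\|\to 0$, and $\|Jx\|_\infty=\sup_n|\langle u_n,x\rangle|\ge\|Tx\|$. Consequently the assignment $S_0(Jx):=Tx$ is a well-defined linear map on the subspace $J(X)\subseteq c_0$ (well-definedness follows since $Jx=Jx'$ forces $\|T(x-x')\|\le\|J(x-x')\|_\infty=0$), and it satisfies $\|S_0\|\le 1$; by uniform continuity it extends to the closed subspace $\overline{J(X)}$ without increasing its norm.

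Finally I would invoke the $\mathcal{P}_\lambda$ property of $Y$: viewing $S_0$ as an operator from the subspace $\overline{J(X)}$ of $c_0$ into $Y$, the defining property gives an extension $S:c_0\to Y$ with $\|S\|\le\lambda\|S_0\|\le\lambda$. Setting $y_n:=Se_n$, where $(e_n)$ is the unit vector basis of $c_0$, and using that $Jx=\sum_n\langle u_n,x\rangle e_n$ converges in $c_0$, the continuity of $S$ yields
$$Tx=S_0(Jx)=S(Jx)=\sum_{n=1}^{\infty}\langle u_n,x\rangle\,y_n,$$
which is the asserted representation. For the summability bound I would pass to the adjoint: for $v\in Y^*$ with $\|v\|\le 1$ one has $\langle v,y_n\rangle=\langle S^*v,e_n\rangle$, and since $c_0^*=\ell_1$ with $\|S^*v\|_1=\sum_n|\langle S^*v,e_n\rangle|$, it follows that $\sum_{n}|\langle v,y_n\rangle|=\|S^*v\|_1\le\|S^*\|\le\lambda$ uniformly in $v$. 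Combined with the estimates on $\{u_n\}$ from the first step, this delivers all three conclusions.
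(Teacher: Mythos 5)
Your proposal is correct and follows essentially the same route as the paper: obtain $\{u_n\}$ from the quantitative form of Theorem~\ref{thm:Tosun}, factor $T$ through $c_0$ via $x\mapsto(\langle u_n,x\rangle)_n$, extend the induced contraction to all of $c_0$ by the $\mathcal{P}_\lambda$ property, and set $y_n$ equal to the images of the unit vectors, reading off the summability bound from $c_0^*=\ell_1$. Your extra care with the norm-tracking refinement of Grothendieck's theorem and the well-definedness of the map on $J(X)$ fills in details the paper leaves implicit, but the argument is the same.
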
 
  The complete details of the proof can be found in \cite{Ter1}. However, it is worth pointing out that idea of the proof  provides a factorization of a compact map through the space $c_0$ as follows:\\[.02in]
 Use Theorem \ref{thm:Tosun},  choose the sequence  $\{u_n\}$ in $ X^*$ satisfying 
$$\displaystyle\lim_n ||u_n|| =0\,\,\,\mbox{ and}\,\,\,  \displaystyle \sup_{n} ||u_n|| \leq ||T|| + \epsilon\,\,\,\mbox{and}\,\, ||Tx|| \leq \sup|<u_n, x>|.$$ 
Define linear mapping $$S: X \to c_0 \quad \mbox{ by} \quad Sx=\{<u_n,x>\}, $$ and observe that  $S$ is compact, then define a linear mapping $$R_0: S(X) \to Y\,\,\mbox{by}\,\, R_0(Sx)=Tx,$$  the inequality
$$ ||R_0(Sx)||=||Tx|| \leq  \sup|<u_n, x>|= ||Sx||$$  implies that $||R_0|| \leq 1$.  

\[
\begin{tikzcd}
&c_0 \arrow{rd}{\overset{\sim} {R}}\\
&S(E) \arrow[hook]{u} \arrow{r}{}[swap]{R_0}
&F
\end{tikzcd}
\]

Since $Y$ is a $\mathcal{P}_{\lambda}$ space, there exists an extension $\tilde{R}$ of $R_0$ from $\tilde{R:}  c_0\to F$ with $||\tilde{R}|| \leq \lambda ||R_0||=\lambda$ and

$$
E\xrightarrow[]{~~~S~~~}c_0\xrightarrow[]{~~~\widetilde R~~~}F, 
$$
evidently $T=\widetilde{R} S$.

 By considering $\{e_n\}\in c_0$ and setting $y_n =\widetilde{R}(e_n)$ we obtain
$$ \displaystyle \sup_{||v|| \leq 1} \displaystyle  \sum_{n=1}^{\infty} |< v, y_n> | \leq \lambda,\,\,\mbox{and}\,\, Tx= \sum_{n=1}^{\infty} <u_n, x> y_n.$$

Using all of the above results of Terzio\u{g}lu  one can find the following conclusions in \cite{Ter2}.
\begin{corollary}
\begin{enumerate}
\item Every $\mathcal{P}_{\lambda}$ space has the approximation property.
\item Every compact linear operator of an $L^{\infty}$ space into a Banach space is infinite-nuclear.
\item Let $T$  be a compact linear map of an infinite-dimensional space $X$ into a Banach space $Y$. Then there exists an infinite dimensional closed subspace $M$ of $X$ such that
 $T_{M}: M \to T(M)$ is infinite nuclear.
\end{enumerate}
\end{corollary}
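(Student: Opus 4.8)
The plan is to isolate the real content of the displayed $c_0$-factorization and use it as a single tool for all three parts. Reformulated, that computation says: an operator $T\colon X\to Y$ is infinite-nuclear \emph{if and only if} it factors as $T=R\circ S$ through $c_0$, where $S\colon X\to c_0$ is compact and $R\colon c_0\to Y$ is bounded. Indeed, from the infinite-nuclear data one sets $Sx=\{\langle u_n,x\rangle\}$ (compact because $\|u_n\|\to 0$) and $R(\{a_n\})=\sum_n a_n y_n$ (bounded because $\sup_{\|v\|\le 1}\sum_n|\langle v,y_n\rangle|<\infty$ is exactly $\|R\|$); conversely, from such a factorization one reads off the $u_n$ as the coordinate functionals of $S$ (so $\|u_n\|\to 0$ since $S$ is compact) and sets $y_n=R e_n$. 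I would record this equivalence first, since each of the three assertions amounts to producing such a factorization.

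For (1) I would combine this criterion with Grothendieck's characterization of the approximation property: $Y$ has the approximation property once $\mathcal{F}(X,Y)$ is dense in $\mathcal{K}(X,Y)$ for \emph{every} Banach space $X$. So fix an arbitrary $X$ and a compact $T\colon X\to Y$ with $Y$ a $\mathcal{P}_\lambda$-space. By Terzio\u{g}lu's theorem (Theorem~\ref{thm:Tosun}) $T$ is infinite-nuclear, hence $T=RS$ as above. Let $P_N\colon c_0\to c_0$ be the projection onto the first $N$ coordinates; each $RP_NS$ has finite rank, and $\|T-RP_NS\|\le\|R\|\,\|(I-P_N)S\|$. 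Because $S$ is compact, $S(B_X)$ is precompact in $c_0$, and precompact sets in $c_0$ have uniformly small tails, so $\|(I-P_N)S\|\to 0$. Thus $T\in\overline{\mathcal{F}(X,Y)}$ for every $X$, and $Y$ has the approximation property.

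For (2), with $X$ an $L^\infty$-space, Terzio\u{g}lu's theorem already supplies half of a factorization: a compact $S\colon X\to c_0$, $Sx=\{\langle u_n,x\rangle\}$ with $\|u_n\|\to 0$, together with the estimate $\|Tx\|\le\|Sx\|_\infty$ that defines $R_0\colon\overline{S(X)}\to Y$, $R_0(Sx)=Tx$, with $\|R_0\|\le 1$. The difficulty is that $R_0$ lives only on the subspace $\overline{S(X)}\subseteq c_0$, and the criterion requires extending it to a bounded operator on \emph{all} of $c_0$ without assuming $Y$ injective. This extension is the step I expect to be the main obstacle: $Y$ is arbitrary, and $\overline{S(X)}$ need not be complemented in $c_0$ (since $c_0$ is not homogeneous), so the $L^\infty$-hypothesis on the domain must be used. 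My plan is to pass to the adjoint: $T^*\colon Y^*\to X^*$ is compact by Schauder, and $X^*=(L^\infty)^*$ is an AL-space, hence (Kakutani) isometric to some $L^1(\nu)$, which has cotype $2$. Running the same construction on $T^*$ produces a half-factorization through $c_0$ whose \emph{target} is now $L^1(\nu)$, so that, $c_0$ being a $C(K)$-space, the partially-defined operator extends to all of $c_0$ by a Maurey-type extension theorem for cotype-$2$-valued maps. This factors $T^*$ through $c_0$; transferring the resulting series back to $T$, using the principle of local reflexivity to replace the functionals landing in $Y^{**}$ by elements of $Y$, yields the infinite-nuclear representation of $T$.

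For (3) the same tension—extending $R_0$ off $\overline{S(X)}$—is resolved not by cotype but by \emph{choosing the subspace} $M$. We may assume $T$ has infinite rank, since otherwise $T|_M$ is finite rank, hence trivially infinite-nuclear, for any infinite-dimensional $M$. With $S$ and $R_0$ as above, $S$ then has infinite rank, and I would build, by a Bessaga--Pe\l czy\'nski gliding-hump argument, a normalized basic sequence $\{x_k\}$ in $X$ and indices $n_1<n_2<\cdots$ so that the images $Sx_k$ are, up to a small perturbation, concentrated on coordinate $n_k$; this is possible because $\bigcap_{n\le N}\ker u_n$ stays infinite-dimensional at every stage. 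Setting $M=\overline{\mathrm{span}}\{x_k\}$, the closure $\overline{S(M)}$ is a copy of $c_0$ spanned by a block of the unit-vector basis, and such a subspace is complemented in $c_0$ by the obvious coordinate projection $Q$. Consequently $R_0Q\colon c_0\to\overline{T(M)}$ is bounded and restricts to $R_0$ on $\overline{S(M)}$, so $T|_M=(R_0Q)\,(S|_M)$ exhibits $T|_M$ as infinite-nuclear by the criterion. The delicate point throughout is exactly this passage from Terzio\u{g}lu's one-sided estimate to a two-sided factorization through $c_0$: in (1) the injective target hands it to us, in (2) it is bought with cotype $2$ after dualizing, and in (3) it is arranged by selecting $M$ so that the range lands in a complemented copy of $c_0$.
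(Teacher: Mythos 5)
The paper itself offers no proof of this corollary---it is quoted from \cite{Ter2} with only the remark that it follows from the preceding results---so what follows assesses your argument on its own terms. Your opening reformulation (infinite-nuclear $\Leftrightarrow$ $T=R\circ S$ through $c_0$ with $S$ compact and $R$ bounded) is correct, the key point being that precompact subsets of $c_0$ have uniformly vanishing tails, and your proof of (1) is complete: it is exactly the intended use of the $\mathcal{P}_\lambda$-representation theorem together with Grothendieck's characterization of the approximation property.

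Part (2) contains a genuine gap, in fact two. First, the extension step fails as described: Maurey's extension theorem requires the ambient space to have type $2$, and $c_0$ has no nontrivial type (it contains the $\ell_\infty^n$ uniformly); the fallback ``operators from $C(K)$-spaces into cotype-$2$ spaces are $2$-summing, and $2$-summing operators on subspaces extend'' is also unavailable, because your partially defined operator $R_0$ lives on $\overline{S(Y^*)}$, an arbitrary closed subspace of $c_0$ which need not be an $\mathcal{L}_\infty$-space, so nothing forces $R_0$ to be $2$-summing. Second, and decisively: even granting a full factorization $T^*=\tilde R S$ with $S:Y^*\to c_0$ compact, transposing yields $Tx=\sum_n\langle z_n,x\rangle w_n$ with $\|w_n\|\to 0$ and $(z_n)$ weakly unconditionally Cauchy in $X^*$, i.e.\ a factorization of $T$ through $\ell_1$ with compact \emph{second} leg. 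Infinite-nuclearity demands the two conditions the other way around ($\|u_n\|\to 0$ and $(y_n)$ wuC), i.e.\ factorization through $c_0$ with compact \emph{first} leg; these are different ideals, and local reflexivity only repairs the cosmetic issue that $w_n\in Y^{**}$, not the interchange of hypotheses. The $L^\infty$-assumption must be used on the domain side without dualizing, e.g.\ via the local $\ell_\infty^n$-structure of $X$ (equivalently, the injectivity of $X^{**}$) to extend $R_0$ from $\overline{S(X)}$ to all of $c_0$---precisely the step the $\mathcal{P}_\lambda$ theorem performs when the \emph{target} is injective.

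For (3) the strategy---choose $M$ so that $\overline{S(M)}$ sits in a $1$-complemented coordinate subspace of $c_0$---is sound, but the execution has holes. A vector $x_k\in\bigcap_{n\le N}\ker u_n$ has $Sx_k$ supported, up to a small tail, on a \emph{block} of coordinates, not on a single coordinate $n_k$; that is harmless (spans of disjointly supported blocks are still $1$-complemented), but you must choose the block-ends after $x_k$ so that the tail is small relative to $\|Sx_k\|$, treat separately the degenerate case where $S$ (hence $T$, by the inequality $\|Tx\|\le\|Sx\|$) vanishes on an infinite-dimensional subspace, and run a summable-perturbation argument to transfer the projection from the exact blocks to the vectors $Sx_k$ themselves. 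As written, the ``concentrated on coordinate $n_k$'' claim is false and the normalization of the possibly null sequence $\|Sx_k\|$ is unaddressed, so (3) should be regarded as a plausible outline rather than a proof.
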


\section {Compactness with Approximation Scheme}
Approximation schemes were introduced by Butzer and Scherer for Banach spaces in 1968  \cite{But} and later by Brudnij and Krugljak \cite{BK}. These concepts find its best application in a paper by Pietsch  \cite{Pi}, where  he defined approximation spaces, proved embedding, reiteration and representation results and established connection to interpolation spaces.

Let $X$ be a Banach space and $\{A_n\}$ be a sequence of subsets of $X$ satisfying:
\begin{enumerate}
\item $A_1 \subseteq A_2 \subseteq \dots \subseteq X$
\item $\lambda A_n \subseteq A_n$ for all scalars $\lambda$ and $n=1,2,\dots$.
\item $A_m + A_n \subseteq A_{m+n}$ for $m,n= 1,2,\dots$.
\end{enumerate}

For example, for $1\leq p < \infty$ if we consider the space  $X= L_p[0,1]$, then the collection of sets  $\{A_n\}= \{L_{p+ \frac{1}{n}}\}$ form an approximation scheme like above. \\ Pietsch's approximation spaces $ X^{\rho}_{\mu}$   ($0< \rho< \infty, \,\, 0< \mu \leq \infty $)  is defined by considering the $n$-th approximation number $\alpha_n(f,X)$, where
$$ \alpha_n(f,X): = \inf \{ ||f-a|| : \,\, a\in A_{n-1}\}$$  and 
$$ X^{\rho}_{\mu}=\{ f\in X: \,\,\{n^{\rho-\frac{1} {\mu} } \, \alpha_n(f,X)\}\in \ell^{\mu} \}.$$
In the same paper \cite{Pi}, embeddings, composition and commutation as well as representation interpolation of such spaces are studied and applications to the distribution of Fourier coefficients and eigenvalues of integral operators are given.

 In the following we consider for each $n\in \mathbb{N}$ a family of subsets $Q_n$ of $X$ satisfying the very same  three conditions stated above. For example  for $Q_n$ could be the set of all at most $n$-dimensional subspaces of any Banach space $X$, or if our Banach space $X= \mathcal{L}(E)$, namely the set of all bounded linear operators on another Banach space $E$, then we can take $Q_n= N_n(E)$ the set of all $n$-nuclear maps on $E$.

Compactness relative to an approximation scheme for bounded sets and linear operators can be studied by using Kolmogorov diameters as follows.
Let $D \subset X$ be a bounded subset and $U_X$ denote the closed unit ball of $X$. Suppose $Q=(Q_n(X)_{n\in \mathbb{N}})$ be an approximation scheme on $X$, then the $n$th Kolmogorov diameter of $D$ with respect to this scheme $Q$ is denoted by $\delta_n(D, Q)$ and defined as
$$ \delta_n(D, Q)= \inf\{r>0 : \,\, D \subset rU_X +A\quad \mbox{for some} \quad A \in Q_n(X)\}.$$

Let $Y$ be another Banach space and $T\in \mathcal{L}(Y,X)$, then the $n$th Kolmogorov diameter of $T$ with respect to this scheme $Q$ is denoted by $\delta_n(T, Q)$ and defined as
$$ \delta_n(T, Q)= \delta_n(T(U_X), Q) .$$

\begin{definition} 
We say $D$ is $Q$-compact set   if $$ \lim_{n}\delta_n(D, Q)=0$$ and similarly $T\in \mathcal{L}(Y,X)$ is a $Q$-compact map, if $$ \lim_{n} \delta_n(T, Q)=0.$$

\end{definition}
The following example illustrates that  not every  $Q$-compact operator is compact.
\begin{example} 

Let $\{r_n(t)\}$ be the space spanned by the Rademacher functions.  It can be seen from the Khinchin inequality \cite{Lin}that
\begin{equation}
\ell^2 \approx  \{r_n(t)\}\subset L_p[0,1] \text{ for all }1\leq p \leq \infty.
\end{equation}
We define an approximation scheme $A_n$ on $L_p[0,1]$ as follows:
\begin{equation}
 A_n=L_{p+\frac{1}{n}}.
\end{equation}
$L_{p+\frac{1}{n}}\subset L_{p+\frac{1}{n+1}}$ gives us $A_n\subset A_{n+1}$. for $n=1,2,\dots,$ and it is easily seen that $A_n+A_m \subset A_{n+m}$ for $n,m=1,2,\dots,$ and that $\lambda A_n \subset A_n$ for all $\lambda$.  Thus $\{A_n\}$ is an approximation scheme.

Next, we claim that for $p\geq 2$ the projection $P: L_p[0,1] \to R_p$ is a $Q$-compact map, but not compact,
where $R_p$ denotes the closure of the span of $\{r_n(t)\}$ in $L_p[0,1]$.  
\[
\begin{array}{ccc}
    L_p & \stackrel{i}{\longrightarrow} & L_2  \\
P \downarrow &   & \downarrow P_2 \\
 R_p & \stackrel{j}{\longleftarrow}& R_2 \\
\end{array}
\]

We know that for $p\geq 2$, $L_p[0,1]\subset L_2[0,1]$ and $R_2$ is a closed subspace of $L_2[0,1]$ and  $$P=j\circ P_2\circ i$$ where $i,j$ are isomorphisms shown in the above figure. $P$ is not a compact operator, because  dim$R_p=\infty$, on the other hand it is a $Q$-compact operator because, if we let $U_{R_p},U_{L_p}$ denote the closed unit balls of $R_p$ and $L_p$ respectively, it is easily seen that $P(U_{L_p})\subset \|P\|U_{R_p}$.  But $U_{R_p} \subset CU_{R_{P+\frac{1}{n}}}$ where $C$ is a constant follows from the Khinchin inequality.  Therefore, $$P(U_{L_p})\subset L_{p+\frac{1}{n}},\quad\mbox{ which gives} \quad \delta_n(P,Q)\to 0.$$ 

Next we give a characterization of $Q$-compact sets as subsets of the closed convex hull of certain uniform null-sequences.
\begin{definition}
Suppose $X$ is a Banach space with an approximation scheme $Q_n$.  A sequence $\{x_{n,k\}_k}$  in $X$ is called an  order $c_0$-sequence if
\begin{enumerate}
\item $\forall n=1,2,\dots$ there exists $A_n \in Q_n$ and a sequence $\{x_{n,k}\}_k \subset A_n$
\item $||x_{n,k}|| \to 0$ as $n\to \infty$ uniformly in $k$.
\end{enumerate}
\end{definition}  
\begin{theorem} \label{thm:order}
Let $X$ be a Banach space with an approximation scheme with sets $A_n \in Q_n$
 satisfy the condition $ |\lambda | A_n \subset A_n$ for $|\lambda| \leq 1$. A bounded subset $D$ of $X$ is $Q$-compact if and only if there is an order $c_0$-sequence  $\{x_{n,k\}_k} \subset A_n$ such that

 $$D \subset\left \{ \sum_{n=1}^{\infty} \lambda_n x_{n,k(n)} :\quad  x_{n,k(n)}\in (x_{n,k})\quad \sum_{n=1}^{\infty} |\lambda_n| \leq 1 \right \}.$$
  
 \end{theorem}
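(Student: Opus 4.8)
The plan is to prove both implications by transporting Grothendieck's theorem (Theorem~\ref{thm:Groth}) to the approximation--scheme setting, with the Kolmogorov diameters $\delta_n(D,Q)$ playing the role that finite $\epsilon$--nets play in the classical argument. The easy inclusion is the ``if'' direction. Assuming an order $c_0$--sequence $\{x_{n,k}\}_k\subset A_n$ with the stated containment, I would fix $\epsilon>0$ and use the uniform nullity to pick $N$ with $\sup_k\|x_{n,k}\|<\epsilon$ for $n>N$. For $d\in D$, writing $d=\sum_n\lambda_n x_{n,k(n)}$ with $\sum_n|\lambda_n|\le 1$ and splitting the series at $N$, the tail has norm at most $\epsilon\sum_{n>N}|\lambda_n|\le\epsilon$. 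For the head, since $|\lambda_n|\le 1$ the balanced--scaling hypothesis gives $\lambda_n x_{n,k(n)}\in A_n\subseteq A_N$, and the additivity $A_m+A_n\subseteq A_{m+n}$ forces the head into a single scheme set $A\in Q_{N^2}$. Hence $D\subset \epsilon U_X+A$, i.e. $\delta_{N^2}(D,Q)\le\epsilon$; as the $\delta_n$ are non--increasing (from $Q_n\subseteq Q_{n+1}$) and $\epsilon$ is arbitrary, $\delta_n(D,Q)\to 0$.

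For the converse (the substantive direction) I would assume $\delta_n(D,Q)\to 0$ with $D\subset\rho U_X$, and first choose indices $m_1<m_2<\cdots$ and sets $B_j\in Q_{m_j}$ with $D\subset 2^{-j}U_X+B_j$ (possible since $\delta_{m_j}(D,Q)<2^{-j}$ for $m_j$ large). Fixing $d\in D$, I would select $b_j\in B_j$ with $\|d-b_j\|\le 2^{-j}$, so that $b_j\to d$, and then telescope: $d=b_1+\sum_{j\ge 1}(b_{j+1}-b_j)$. The difference $v_j:=b_{j+1}-b_j$ satisfies $\|v_j\|\le 2^{-j+1}$, and because scheme sets are balanced and additive, $v_j\in B_{j+1}+(-B_j)\subseteq B_{j+1}+B_j=:G_j\in Q_{m_{j+1}+m_j}$. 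This exhibits $d$ as an infinite sum of pieces drawn from scheme sets of increasing order and with geometrically decaying norms.

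To recast this in the required form I would introduce weights $\mu_j>0$ with $\sum_j\mu_j\le 1$ that decay \emph{more slowly} than $\|v_j\|$, for instance $\mu_j\sim 2^{-j/2}$, and set $x_j^{(d)}:=v_j/\mu_j$, which still lies in $G_j$ by scalar invariance of the scheme. Then $\|x_j^{(d)}\|\le \|v_j\|/\mu_j=O(2^{-j/2})\to 0$ uniformly in $d$, while $d=\sum_j\mu_j x_j^{(d)}$. Placing $b_1$ at order $m_1$, placing $v_j$ at order $q_j:=m_{j+1}+m_j$, and taking the zero sequence at the unused orders (every $A\in Q_n$ contains $0$, since $0\cdot A\subseteq A$), one obtains an order $c_0$--sequence together with coefficients $\lambda_{q_j}=\mu_j$ satisfying $\sum_n|\lambda_n|\le 1$ and $d=\sum_n\lambda_n x_{n,k(n)}$, which is exactly the asserted containment.

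The hard part will be this converse, and within it three points demand care. First, one must balance two competing constraints simultaneously --- the coefficients summable in $\ell^1$ versus the rescaled pieces uniformly null --- which is precisely what dictates the ``square--root'' splitting of the geometric rate rather than a naive rescaling. Second, keeping the telescoped differences inside scheme sets of \emph{controlled} order rests essentially on the additivity $A_m+A_n\subseteq A_{m+n}$ and on balancedness, so the bookkeeping of the orders $q_j=m_{j+1}+m_j$ must be tracked honestly. Third, and most delicate, is the indexing of $\{x_{n,k}\}_k$: the construction produces at each level a family indexed by the points $d\in D$, so obtaining a genuine sequence in the index $k$ requires either separability of $D$ or a willingness to read ``sequence'' as ``family,'' since the representing set on the right need not be closed and a density argument alone would not deliver exact representations.
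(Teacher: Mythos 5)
The paper does not actually prove this theorem in the text; it refers the reader to the $p$-Banach space argument in the Aksoy--Nakamura reference, so there is no in-paper proof to compare against line by line. Your strategy is the natural one and is, in substance, the Grothendieck-style argument that such a proof must follow: the ``if'' direction by splitting the series at a level where the uniform nullity takes over and absorbing the head into a single scheme set via balancedness and additivity; the ``only if'' direction by extracting sets $B_j\in Q_{m_j}$ with $D\subset 2^{-j}U_X+B_j$, telescoping, and rescaling the differences by slowly decaying $\ell^1$ weights. The ``if'' half is complete as written (up to the harmless replacement of $N^2$ by $N(N+1)/2$). In the converse, two small points of bookkeeping: rescaling $v_j$ by $1/\mu_j>1$ uses the scheme's unrestricted scalar invariance $\lambda A_n\subseteq A_n$, not merely the balancedness assumed in the theorem's hypothesis (the underlying definition of an approximation scheme does supply it); and the leading term needs a coefficient, e.g.\ $d=\tfrac12(2b_1)+\sum_j\mu_j x_j^{(d)}$ with $\sum_j\mu_j\le\tfrac12$, again using unrestricted scaling to keep $2b_1$ inside $B_1$.

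The genuine gap is exactly the one you flag at the end and do not close: your construction produces, at level $q_j=m_{j+1}+m_j$, a family $\{x_j^{(d)}\}_{d\in D}$ indexed by the points of $D$, whereas the theorem demands a sequence $\{x_{n,k}\}_k$ at each level. This is not a removable technicality in full generality. The right-hand set in the theorem is norm separable (finite subsums over the countable family with rational coefficients are dense, because the tails are controlled by $\sup_{n>N,\,k}\|x_{n,k}\|\to 0$), so the stated conclusion forces $D$ to be separable; but $Q$-compactness alone does not (take $Q_n=\{X\}$ for every $n$, which satisfies the three axioms and makes every bounded subset of a nonseparable $X$ trivially $Q$-compact). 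So one must either add a separability hypothesis on $D$ (or conditions on the scheme that guarantee it), or read the level-$n$ ``sequence'' as a uniformly small subset of $A_n$. As you note, passing to a countable dense subset of $D$ does not repair the argument, since you then obtain exact representations only for the dense subset and differences of points of $D$ need not lie in $D$. Modulo this point --- which is at least as much a defect of the statement as of your proof --- your argument is correct and is the intended one.
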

 Proof of the above theorem can be obtained from the one given for  $p$-Banach spaces in  \cite{Ak-Nk}. Clearly this is an analogue of Grothendieck's theorem given above in Theorem \ref{thm:Groth}   for $ Q$-compact sets.
\section{Terzio\u{g}lu's Theorem for $Q$-compact Maps}
Terzio\u{g}lu's  characterization of compact maps relies on both Grothendieck's  and Schauder's theorems.  Above Theorem 3.2 is Grothendieck's theorem for $Q$-compact sets, therefore we turn our attention to the  relationship between $T$ being $Q$-compact and its transpose $T^*$ being $Q$-compact. The relationship between the approximation numbers of $T$ and $T^*$ was studied by several authors, it is shown in \cite{Hut} that  for $T \in \mathcal{L}(X)$, we have
$$ \mbox{dist}(T, \mathcal{F}) \leq 3\, \mbox{dist}(T^*, \mathcal{F^*})$$
where $ \mathcal{F}$ and $ \mathcal{F^*}$ denote the class of all finite rank operators on $X$ and $X^*$ respectively. Central to the proof of  such result is the assumption of local reflexivity possessed by all Banach spaces, (see \cite{Lin}).  It is not hard to show that   if we assume that our space $X$ with approximation scheme $Q_n$ satisfies slight modification of this property, called extended local reflexivity principle, then we have  
$$   \alpha_n(T, Q) \leq 3\, \alpha_n(T^*, Q^*).$$  Where, by $\alpha_n(T, Q)$ we mean the $n$th approximation number defined as
$$ \alpha_n(T, Q) = \inf \left\{||T-B||: \,\, B\in \mathcal{L}(X),\,\, B(X) \in Q_n (X)\right\}.$$

However, we do not have a proof of the  Schauder's theorem for $Q$-compact maps. In the following we present a result analogous to Terzio\u{g}lu's Theorem  for $Q$-compact maps under the assumption that  both $T$ and $T^*$ are $Q$-compact.
\begin{theorem}
Let $E$ and $F$ be Banach spaces , $T\in \mathcal{L}(E,F) $ and  assume that  both $T$ and $T^*$ are $Q$-compact maps. Then there exists sequence $\{u_{n,k}\}\in Q_n$ with $||u_{n,k}|| \to 0$ for $n\to \infty$ for  all $k$, such that the inequality
$$ ||Tx|| \leq \mbox{sup} | < u_{n,k(n)}, x>|$$ holds for every $x\in E$.  Here $Q_n$ is a ``special" class of subsets of $E^*$ with the property that  $u_{n,k(n)}\in \{u_{n,k}\}$.
\end{theorem}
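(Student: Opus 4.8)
The plan is to imitate the proof of Terzio\u{g}lu's Theorem \ref{thm:Tosun} almost line by line, making two substitutions dictated by the $Q$-compact setting. Where the classical argument invokes Schauder's theorem to pass from compactness of $T$ to compactness of $T^{*}$, no $Q$-compact analogue is available, so I would simply take the $Q$-compactness of $T^{*}$ as a standing hypothesis; this is exactly why the statement assumes that \emph{both} $T$ and $T^{*}$ are $Q$-compact rather than deriving one from the other. Where the classical argument invokes Grothendieck's theorem to represent a totally bounded set inside the closed convex hull of a norm-null sequence, I would use instead its $Q$-compact counterpart, Theorem \ref{thm:order}, which produces an order $c_{0}$-sequence.

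First I would let $V$ denote the closed unit ball of $F^{*}$ and consider the set $D := T^{*}(V) \subset E^{*}$. By hypothesis $T^{*}$ is $Q$-compact, which is precisely the assertion that $D$ is a $Q$-compact bounded subset of $E^{*}$ for the ``special'' scheme $Q_{n}$ on $E^{*}$. Applying Theorem \ref{thm:order} to $D$ (with $E^{*}$ in the role of $X$) yields an order $c_{0}$-sequence $\{u_{n,k}\}_{k} \subset A_{n} \in Q_{n}$ with $\|u_{n,k}\| \to 0$ as $n \to \infty$ uniformly in $k$, together with the inclusion
$$ D \subset \left\{ \sum_{n=1}^{\infty} \lambda_{n} u_{n,k(n)} : \ u_{n,k(n)} \in (u_{n,k}), \ \sum_{n=1}^{\infty} |\lambda_{n}| \leq 1 \right\}. $$
This already delivers the sequence $\{u_{n,k}\}$ with the required null behaviour and membership in $Q_{n}$.

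It then remains only to derive the pointwise estimate. For fixed $x \in E$ I would use the duality identity $\|Tx\| = \sup_{v \in V} |\langle v, Tx\rangle| = \sup_{v \in V} |\langle T^{*}v, x\rangle|$, valid for any bounded operator by the Hahn--Banach theorem and requiring no compactness. Given $v \in V$, the inclusion above lets me write $T^{*}v = \sum_{n} \lambda_{n} u_{n,k(n)}$ with $\sum_{n} |\lambda_{n}| \leq 1$, whence
$$ |\langle T^{*}v, x\rangle| = \Bigl| \sum_{n=1}^{\infty} \lambda_{n} \langle u_{n,k(n)}, x\rangle \Bigr| \leq \Bigl(\sum_{n=1}^{\infty} |\lambda_{n}|\Bigr) \sup_{n,k} |\langle u_{n,k(n)}, x\rangle| \leq \sup |\langle u_{n,k(n)}, x\rangle|. $$
Taking the supremum over $v \in V$ gives $\|Tx\| \leq \sup |\langle u_{n,k(n)}, x\rangle|$, which is the claimed inequality.

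The main obstacle is not the inequality itself --- that is a short duality computation --- but the structural step of guaranteeing that $T^{*}(V)$ is a genuinely $Q$-compact subset of $E^{*}$ for a scheme $Q_{n}$ satisfying the balancedness hypothesis $|\lambda| A_{n} \subset A_{n}$ of Theorem \ref{thm:order}. In the classical proof this transfer is automatic, supplied by Schauder's theorem; since no $Q$-compact Schauder theorem is at hand, it must be postulated, which is why one assumes both $T$ and $T^{*}$ to be $Q$-compact. Identifying the correct ``special'' dual scheme $Q_{n}$ on $E^{*}$ --- one for which the hypothesis on $T^{*}$ is meaningful and for which Theorem \ref{thm:order} genuinely applies --- is therefore the crux of the matter; once it is fixed, the remainder is a faithful transcription of Terzio\u{g}lu's original argument.
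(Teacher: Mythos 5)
Your proposal follows essentially the same route as the paper: apply Theorem \ref{thm:order} (the $Q$-compact analogue of Grothendieck's theorem) to $T^{*}(U_{F^{*}})$, which is legitimate because $Q$-compactness of $T^{*}$ is assumed rather than derived from a Schauder-type theorem, and then run the standard duality estimate $\|Tx\| = \sup_{v}|\langle T^{*}v,x\rangle| \leq \sup|\langle u_{n,k(n)},x\rangle|$. Your version is in fact slightly more careful than the paper's in fixing $v$ before expanding $T^{*}v$ as an absolutely convergent combination, and in flagging that the ``special'' dual scheme on $E^{*}$ must satisfy the balancedness hypothesis of Theorem \ref{thm:order}.
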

\begin{proof}
Since $T^*: F^* \to E^*$  is $Q$-compact, thus by the Theorem \ref{thm:order},  $T^*(U_{F^*})$ is a $Q$-compact set, thus there exists a sequence $\{u_{n,k}\}_k \subset A_n\in Q_n$ such that  $|| u_{n,k}|| \to 0$ as $n\to \infty$ uniformly in $k$ and  
 $$T^*(U_{F^*}) \subset\left \{ \sum_{n=1}^{\infty} \lambda_n u_{n,k(n)} :\quad  u_{n,k(n)}\in (u_{n,k})\quad \sum_{n=1}^{\infty} |\lambda_n| \leq 1 \right \}.$$
Then for each $x\in E$, we have:

$$ || Tx|| =\displaystyle \sup _{v \in U_{F^*} } |<v, Tx> | =  \sup _{v \in U_{F^*} } |<T^*v, x> | = \sup _{n} |<\sum_{n=1}^{\infty}\lambda_n u_{n,k(n)}, x > |$$
and thus 
$$   || Tx|| \leq \sum_{n=1}^{\infty} |\lambda_n| \sup_{n} |<  u_{n,k(n)}, x>|  \leq  \,\sup_{n} | < u_{n,k(n)}, x>|.$$
\end{proof}

\begin{remark}
We say a map  $T\in \mathcal{L}(E,F)$ is a $Q$-compact map, if $ \lim_{n} \delta_n(T, Q)=0$.  To obtain "Schauder's Theorem"  for $Q$-compact maps, one seeks  a relationship  between $\delta_n(T)$ and $\delta_n(T^*)$. K. Astala  in \cite{As} proved  that under the  assumption  that the Banach space  $E$ has the lifting property and the Banach space  $F$ has the extension property, for a map   $T\in \mathcal{L}(E,F)$, one has  $\gamma(T) =\gamma (T^*)$, where  $\gamma(T)$ denotes the measure of non-compactness.  Since 
$$ \lim_{n\to \infty}\delta_n(T) = \gamma(T),$$  by imposing extension and lifting properties on $E$ and $F$ respectively and keeping tract of approximation schemes on these spaces one might obtain Schauder's type of a theorem in this special case.

\end{remark}

\end{example}

 \bibliographystyle{amsplain}

\end{document}